\newtheorem{thm}{Theorem}[section]
\newtheorem{cor}[thm]{Corollary}
\newtheorem{prop}[thm]{Proposition}
\newtheorem{lem}[thm]{Lemma}
\newtheorem*{thm*}{Theorem}
\newtheorem*{cor*}{Corollary}
\theoremstyle{definition}
\newtheorem{defn}[thm]{Definition}
\newtheorem*{defn*}{Definition}
\newtheorem{exam}[thm]{Example}
\newtheorem{rmk}[thm]{Remark}
\newtheorem{nota}[thm]{Notation}
\theoremstyle{remark}
\newcommand{\Set}{\mathrm{Set}}
\newcommand{\Ab}{\mathrm{Ab}}
\newcommand{\finpSet}{\mathrm{Fin}_\ast}
\newcommand{\pSet}{\Set_\ast}
\newcommand{\Spaces}{\mathcal{S}}
\newcommand{\Mod}{\mathrm{Mod}}
\newcommand{\sMod}{s\Mod}
\newcommand{\Fin}{\mathrm{Fin}}
\DeclareFontFamily{U}{min}{}
\DeclareFontShape{U}{min}{m}{n}{<-> udmj30}{}
\newcommand{\op}{\mathrm{op}}
\newcommand{\ang}[1]{\langle #1\rangle}
\newcommand{\CC}{\mathcal{C}}
\newcommand{\ZZ}{\mathbb{Z}}
\newcommand{\fun}{\mathbb{F}_1}
\newcommand{\frB}{\mathfrak{B}}
\renewcommand{\epsilon}{\varepsilon}
\title{The Eilenberg-MacLane Spectrum of $\fun$}
\author{Jonathan Beardsley}
\begin{document}
	
	\maketitle
	
	\vspace{.1in}
	\begin{center}
		\textit{For Jack. May we meet again, somewhere past the Oort Cloud.}
	\end{center}
	
	\vspace{.3in}
	\begin{abstract}
		Given a very special $\Gamma$-space $X$, repeated application of Segal's delooping functor produces the constituent spaces of the associated connective $\Omega$-spectrum. In particular, by applying this construction to \textit{discrete} very special $\Gamma$-spaces (a.k.a.~Abelian groups), one recovers Eilenberg-MacLane spectra. The delooping functor is entirely formal, however, and can be applied to arbitrary $\Gamma$-spaces without any conditions. Work of Connes and Consani suggests that the ``field with one element'' can be fruitfully realized as a discrete $\Gamma$-space (whose excisive approximation is the classical sphere spectrum). This note computes Segal's deloopings of this model of $\fun$ without spectrification (i.e.~prior to excisive approximation). This results in a sequence of \textit{partially symmetric monoidal partial strict $\infty$-categories} with $n^{th}$ entry being the free partially symmetric monoidal partial strict $n$-category with one object and a non-degenerate $n$-morphism. Applying geometric realization recovers the sequence of $n$-spheres with partial symmetric monoidal structure given by the fold map. This indicates that if one allows for partially defined compositions and tensor products then the suspension spectrum $\Sigma^\infty S^0$ has a canonical (partial) infinite loop space structure.
	\end{abstract}

	\section{Introduction}
	In \cite{manin-zeta-functions, manin-numbersasfunctions}, Manin posited an ``absolute geometry'' in which arithemetic could be re-interpreted as the algebraic geometry of function fields. In this setting, a projective curve is replaced by $\mathrm{Spec}(\ZZ)$ or its completion $\overline{\mathrm{Spec}(\mathbb{Z})}$, and its field of rational functions is replaced by the rational numbers $\mathbb{Q}$. There have been many different attempts to reify Manin's vision (for a survey, see \cite{lorscheid-everyone}) of ``geometry over the field with one element,'' as it has come to be known. In this paper, I will work within the formalism of Segal $\Gamma$-sets, as laid out by Connes and Consani in \cite{connesconsani-AbsoluteAlgebraSegalGammaRings, GromovNorm, connes-consani--universalarithmetic}, which I will henceforth refer to as $\fun$-modules. The theory of $\Gamma$-sets, first developed by homotopy theorists \cite{segal,bousfieldfriedlander}, is particularly amenable to a derived and higher categorical theorization. 
	
	The question remains: why consider a derived theory of $\fun$-modules at all? 	A major goal of \cite{connesconsani-AbsoluteAlgebraSegalGammaRings} and \cite{connes-consani-Segal_Gamma_Universal}, expanded upon in \cite{owen-thesis}, is the construction of a $\Gamma$-space $\mathbf{H}(D)$, in fact a functor out of $\Gamma^{\op}$ valued in 1-coskeletal homotopy types, encoding the zeroth and first cohomology of an Arakelov divisor $D$. This enables them to prove a version of the Riemann-Roch theorem for the Arakelov compactification $\overline{\mathrm{Spec}(\mathbb{Z})}$. However, their constructions are somewhat ad hoc, analogous to early attemps at writing down low dimensional cohomology groups without the benefit of derived functors. From the standpoint of contemporary homotopy theory and derived algebraic geometry (e.g.~\cite{htt,ha}), the cohomology objects of a sheaf $\mathcal{F}$ on an Arakelov variety should be the homotopy groups of a sheaf of spectra built by delooping $\mathcal{F}$ into higher Eilenberg-MacLane stacks. Such a perspective automatically provides access to higher cohomology groups.
	
	Unfortunately, this approach breaks down when working with sheaves of general $\fun$-algebras. Indeed, on any open subset that includes the Archimedean point $\{\infty\}\in\overline{\mathrm{Spec}(\mathbb{Z})}$, Connes and Consani's structure sheaf gives a set with a commutative binary operation that is non-associative and only partially defined. If one group completes to a sheaf of Abelian groups, or attemps to produce a classical spectrum from this data, one destroys all the relevant arithmetic information. In other words, by group completing one can produce a structure sheaf of ring spectra on $\overline{\mathrm{Spec}(\ZZ)}$, but it will carry no useful information about Arakelov divisors.
	
	It is necessary to introduce a new type of derived object that remembers the number theoretic (and combinatorial) structure of the base space. I would call such a thing an \textit{$\fun$-spectrum} but the most general possible definition remains elusive. In light of this, and since the goal here is to perform a \textit{computation}, I will avoid giving a formal definition in this paper. Connective versions of such things, however, are relatively straightforward to define: they should be reduced, \textit{but not necessarily special or grouplike}, functors out of $\Fin_\ast$ into a pointed category $\CC$ which allows for infinitely many ``categorical degrees of magnitude'' without enforcing any Segal conditions whatsoever (which correspond to group completion in the categorical direction). In other words, objects that have ``room'' for higher commutative algebra structure and $n$-morphisms of all dimensions but do not put any requirements (e.g.~of being everywhere defined, or single-valued, or associative) on the tensor product and composition operations. An example of this would be $\Gamma$-objects in the $\infty$-category of pointed $\infty$-tuple simplicial spaces, $s^\infty\mathcal{S}_\ast$, though it may suffice to replace $\Delta^\infty$ with a \textit{globular} localization in the form of $\Theta_\infty=\Delta\wr\Delta\wr\cdots$. Removing connectivity entirely likely requires replacing $\Gamma$ with $s^\infty\Spaces_\ast$ itself, which would generalize Heine's work describing categorical spectra as a certain class of excisive functors out of pointed $(\infty,\infty)$-categories \cite[Theorem 6.5.3]{heine-SHofHigherCats}.
	
	In any case, the purpose of this paper is to introduce, and completely compute the structure of, what should be the monoidal unit of any reasonable ($\infty-$)category of $\fun$-spectra. I'll denote this object by $\mathfrak{B}^\infty\fun$, as it is produced by infinitely delooping our model of $\fun$. It is straightforward to see that if one applies geometric realization (group completing in the ``categorical'' direction) and excisive approximation/spectrafication (group completing in the ``algebra'' direction) to this object then one recovers the classical sphere spectrum $\mathbb{S}$. There should be an analogous result localizing it to the categorical sphere spectrum of \cite{masuda-thesis}, but a rigorous proof of this is work in progress.

	\subsubsection*{Acknowledgements}
	Thanks to Philip Hackney, Kiran Luecke, Naruki Masuda, Joe Moeller, Viktoriya Ozornova, Eric Peterson, Lorenzo Riva and Martina Rovelli
	for stimulating conversations regarding this material. This work was partially supported by the Simons Foundation, Award
	ID \#853272.  This note is dedicated to my advisor Jack Morava.

	\section{Background and Notation}
	
	%	We first recall some useful notions from \cite{beardsley-nakamura--projective}. The most important idea is that there are generalizations of the Eilenberg-MacLane embedding $\Ab\subset\Gamma\Set_\ast$ and the bar construction $B\colon \Ab\to s\Ab$ for a very general class of algebraic objects. These are denoted $\widehat{H}$ and $s^\ast\widehat{H}$, respectively.
	
	\begin{defn}
		Let $\Delta$ denote the standard simplex category, i.e.~the category whose objects are finite totally ordered sets and whose morphisms are weakly monotonic functions. Given a category $\CC$, write $s^n\CC$ for the category of functors $(\Delta^{\op})^n\to\CC$. If $\CC$ has a terminal object $\ast$, we write $\CC_\ast$ for the undercategory $\CC^{\backslash\ast}$. In the special case that $n=1$ and $\CC=\Set$, we write $\mathcal{S}$ for the category of simplicial sets and $\mathcal{S}_\ast$ for the category of pointed simplicial sets.
	\end{defn}
	
	%     \begin{defn}
		%     Write $\Delta$ for the category whose objects are finite ordered sets and whose morphisms are weakly order-preserving. Then $\mathtt{S}_\ast$ will denote the category of pointed simplicial sets, i.e.~functors $\Delta^{\op}\to \Set$ equipped with a distinguished $0$-simplex.
		% \end{defn}

	\begin{defn}
		Write $\Fin_\ast$ for the full subcategory of the category of all finite pointed sets spanned by the sets $\ang{n}=\{\ast,1,2,\ldots,n\}$. Given a pointed category $\CC$, the category of pointed functors $\Fin_\ast\to\CC$ will often be called the category of $\Gamma$-objects of $\CC$ (based on Segal's notation $\Fin_\ast\simeq \Gamma^{\op}$).
	\end{defn}
	
	Thinking of $\Gamma$-sets as a basis for absolute algebra was introduced by Connes and Consani in \cite{connesconsani-AbsoluteAlgebraSegalGammaRings}. We build on their foundations here.
	
	\begin{defn}
		Write $\Mod_{\fun}$ for the category of pointed functors from the category of finite pointed sets, $\Fin_\ast$, to the category of all pointed sets $\Set_\ast$. Equip this category with the Day convolution symmetric monoidal structure with respect to the smash product in both the domain and codomain. Write $\fun$ for the monoidal unit, which is the inclusion $\Fin_\ast\hookrightarrow\Set_\ast$.
	\end{defn}
	
	\begin{rmk}
		Recall that a \textit{pointed functor} between pointed categories is one that preserves the zero object. In the case of $\Mod_{\fun}$ this means that we only consider functors $X\colon \Fin_\ast\to \Set_\ast$ with the property that $X\langle 0\rangle\cong\{\ast\}$.
	\end{rmk}

	\begin{rmk}
		The objects I call $\fun$-modules in this paper are called either $\mathfrak{s}$-modules or $\mathbb{S}$-modules in the work of Connes and Consani. This is not entirely inconsistent with existing usage (e.g.~\cite{localKtheory}) but is misleading. In the \textit{stable model structure} on $\Gamma$-spaces, $\fun$ is \textit{weakly equivalent} to the usual sphere spectrum, but very clearly not isomorphic. In other words, there is a localization in $\Gamma\Spaces_\ast$, sometimes called excisive approximation, $\fun\to QS^0=\Omega^\infty\Sigma^\infty S^0$. As mentioned in the introduction, passing to such a model structure, or working with excisive functors, ensures that all arithmetic and combinatorial information is lost. By writing $\fun$-module and $\Mod_{\fun}$ we clearly communicate that our intent in working with such objects is to make connections with existing work on the ``field with one element,'' e.g.~\cite{tits-F1}, \cite{manin-numbersasfunctions} and \cite{kapsmirbraids}. Therefore, we do not localize to classical spectra.
	\end{rmk}
	
	%
	
	%	\begin{rmk}
		%		Note that there is a fully faithful inclusion of the ``discrete'' objects $\Mod_{\fun}\hookrightarrow \Gamma \sSet_\ast$. The $\Gamma$-space associated to the sphere spectrum, often denoted $\Omega^\infty\mathbb{S}$ or $QS^0$, is an algebra in $\Gamma\sSet_\ast$, but $\fun$ remains the monoidal unit. As such, $QS^0$ is an $\fun$-algebra, in the derived sense. This implies, given a suitable construction of ``$\fun$-spectra,'' that the sphere spectrum $\mathbb{S}$ is an algebra over the Eilenberg-MacLane spectrum $\mathfrak{B}^\infty\fun$.
		%	\end{rmk}
	
	Recall that the category of commutative monoids is equivalent to the category of discrete $\Gamma$-spaces that are ``special,'' in Segal's terminology. This equivalence manifests as the well-known Eilenberg-MacLane embedding. As a result, Abelian groups (and, more generally, commutative monoids) as special examples of $\fun$-modules.
	
	\begin{exam}\label{example:EilenbergMacLaneSpectra}
		Every Abelian group gives an example of an $\fun$-module. Write $H\colon \Ab\to\Mod_{\fun}$ for the standard fully faithful Eilenberg-MacLane embedding that takes an Abelian group $A$ to the functor $\langle n\rangle\mapsto A^n$ and uses the addition of $A$ and its unit element to define $HA(\phi)$ for each $\phi\in\Fin_\ast$ (see \cite[Section 0]{segal} or \cite[Example 1.2.1]{localKtheory}).
	\end{exam}
	
	\begin{exam}
		Let $\mathbb{G}$ denote the category of axiomatic projective geometries in the sense of \cite{faure-frolicher-modernPG}. In other words, pairs $(G,\ell)$ where $G$ is a set of points and $\ell$ is a ternary collinearity relation satisfying certain axioms. Then from \cite{beardsley-nakamura--projective} there is a fully faithful embedding $\mathbb{G}\hookrightarrow\Mod_{\fun}$. By \cite{beardsley-dynkin} the image of the $n$-point trivial geometry under this embedding is the $\fun$-module which to $\langle n\rangle$ associates the set of $\lambda$-algebra structures on $\langle n\rangle$ (in the sense of \cite[Definition 1.1]{dynkin-markovprocesses}).
	\end{exam}

	\begin{defn}
		Write  $\mathtt{s}\colon \Delta^{\op}\to \Fin_\ast$ for the corestriction to $\Fin_\ast$ of the (finite, canonically pointed) simplicial set $\Delta^1/\partial\Delta^1\colon \Delta^{\op}\to\Set_\ast$. Then the delooping functor $\mathfrak{B}\colon \Mod_{\fun}\to \sMod_{\fun}$ is defined as the pullback along the following composite functor:\[
		\Delta^{\op}\times\Fin_\ast\xrightarrow{\mathtt{s}\times \mathrm{id}}\Fin_\ast\times\Fin_\ast\xrightarrow{\wedge}\Fin_\ast
		\]
		Note that the delooping functor has \textit{simplicial} $\fun$-modules as its codomain. As a result, it can be iterated, although the simplicial dimension is increased at each step. By abuse of notation, we will write $\mathfrak{B}^n\colon s^k\Mod_{\fun}\to s^{k+n}\Mod_{\fun}$ for this iterated functor for any $k$ (including $k=0$).
	\end{defn}
	
	\begin{rmk}
		Note that evaluation at $[1]\in\Delta^{\op}$  (in the ``new'' simplicial coordinate) gives a right inverse to the delooping $\mathfrak{B}\colon s^n\Mod_{\fun}\to s^{n+1}\Mod_{\fun}$. This seems to be the shadow of a kind of ``loops'' functor that can be applied to these sorts of iterated deloopings. Hence their $0^{th}$ level really can be thought of as an ``infinite loop space.''
	\end{rmk}
	
	\begin{rmk}
		We reiterate that it is essential to avoid the geometric realization functor that Segal uses to reduce from bisimplicial sets to simplicial sets, thereby obtaining a delooping functor $\Gamma\Spaces_\ast\to\Gamma\Spaces_\ast$. Applying this functor will forcibly invert all $1$-morphisms, ensuring that not even the data of commutative monoids is retained.
	\end{rmk}
	
	% In light of the above delooping functor, defining $\fun$-spectra necessarily requires arbitrarily high ``simplicial degrees of magnitude.'' 
	
	% \begin{defn}
		% 	Let $(\Delta^{\op})^\infty$ denote the infinite Cartesian product $\prod_{\infty}\Delta^{\op}$. Objects of this category are infinite sequences of non-negative integers $(n_i)_{i=1}^\infty$, which we will typically denote simply by $(n_i)$. For the constant sequence $(n,n,n,\ldots)$ we write $\overline{n}$. Write $\Delta^{(n_i)}$ for the functor $(\Delta^{\op})^\infty\to \Set$ represented by $(n_i)$.
		% \end{defn}
	
	% \begin{defn}
		% Write $s^\infty\Set_\ast$ for the category of functors $(\Delta^{\op})^\infty\to\Set_\ast$ taking the  $[0,0,0,\ldots]\in(\Delta^{\op})^\infty$ to $\{\ast\}\in\Set_\ast$.
		% \end{defn}
	
	\begin{defn}
		Given a $\Gamma$-object $X$ in a category $\CC$, we will refer to the simplicial object $\mathtt{s}^\ast X$, equivalently $\mathfrak{B}X\langle 1\rangle$, as \textit{the bar construction of~ $X$}. In the case that $X=HA$ for $A$ an Abelian group, this is equal to the classical bar construction.
	\end{defn}
	
	\begin{defn}\label{defn:foldbar}
		Given a category $\CC$ with finite coproducts and a terminal object $\ast$, every pointed object is uniquely an augmented commutative monoid with respect to wedge sum (the coproduct in $\CC_\ast$). For any object $X$, the unit morphism is the pointing $\ast\to X$ and the monoid structure map $X\vee X\to X$ is the fold map. This extends to a coproduct preserving functor $X^\vee\colon\Fin_\ast\to\CC$ in the usual way, and we refer to $\mathtt{s}^\ast X^{\vee}$ as the \textit{folding bar construction of $X$}.
	\end{defn}
	
	\begin{rmk}\label{rmk:foldbar as Kan extension}
		For the reader who desires a formal construction of $\mathtt{s}^\ast X^\vee$, we note that it may be constructed as a left Kan extension in nice cases. Writing $\Fin_\ast^{\leq 1}$ for the full subcategory of $\Fin_\ast$ spanned by the objects $\langle 0\rangle$ and $\langle 1\rangle$, any pointed object $\ast\to X$ in $\CC$ defines a functor $\Fin_\ast^{\leq 1}\to \CC$. Taking the left Kan extension along $\Fin_\ast^{\leq 1}\hookrightarrow\Fin_\ast$ gives $X^\vee$ and pulling back along $\mathtt{s}$ therefore gives the folding bar construction of $X$. We could also take the left Kan extension along the inclusion $\Fin_\ast^{\leq 1}\simeq\Delta^{\op}_{\leq 1}\hookrightarrow\Delta^\op$ to obtain $\mathtt{s}^\ast X^\vee$ directly.
	\end{rmk}
	
	% \begin{rmk}
		% 	As a symmetric monoidal functor out of the PROP for augmented commutative monoids, $X^\vee$ is determined entirely by: its value at $\ang{1}$; its value on the unit morphism $\ang{0}\to\ang{1}$ and the augmentation $\ang{1}\to\ang{0}$; its value on the transposition $\ang{2}\to\ang{2}$; its value on the face map $\ang{2}\to\ang{1}$. This makes it relatively easy to identify among $\Gamma$-objects.
		% \end{rmk}
	
	% \begin{rmk}
		% 	We could also have defined a category of ``partial commutative monoids'' in $\CC$ as functors out of $\Fin_\ast$ whose Segal maps are all monomorphisms. In all the cases of interest to us, $X^\vee$ would then be the free partial monoid on $X$ (and it's not hard to see that in $\Set_\ast$ or $\mathrm{Top}_\ast$ this is entirely consistent with giving $X$ the simplest possible partial commutative monoid structure). This level of generality seems unnecessary for the time being.
		% \end{rmk}
	
	%		\begin{defn}Write $\mathtt{s}^\ast\colon\Mod_{\fun}\to\sSet_\ast$ obtained by pulling back along $\mathtt{s}$. Given $X\in\Mod_{\fun}$ we will sometimes refer to $\mathtt{s}^\ast X$ as the ``underlying simplicial set'' of $\mathfrak{B}X$. It is straightforward to check that $s^\ast X=\mathfrak{B}X_1$. This is distinct from $X_1=X(\{\ast,1\})$ which is the underlying set of $X$.
		%	\end{defn}
	
	\begin{nota}
		Write $\square^n\in s^n\Set$ for the $n$-cube as an $n$-fold simplicial set. More specifically, let $\square^n\in s^n\Set$ denote the free $n$-fold simplicial set on a single non-degenerate element in multidegree $[1,1,\ldots,1]$. Write $\mathfrak{S}^n$ for the quotient $\square^n/\partial\square^n$ of $\square^n$ by all of its faces. Note that $\mathfrak{S}^n$ is canonically pointed and therefore an object of $s^n\Set_\ast$.
	\end{nota}
	
	\begin{rmk}\label{rmk:reps of sn F1 mods}
		An object of $s^n\Mod_{\fun}$ can be equivalently thought of as an $n$-fold pointed simplicial object of $\Mod_{\fun}$, a $\Gamma$-object in $n$-fold simplicial sets, or a functor from $(\Delta^{\op})^n\times\Fin_\ast$ to $\Set_\ast$. We will use these three points of view interchangably, relying on the reader to interpret, from context, how to think about the object in question. The ``default,'' however, will be to think of these objects in the second sense above: $\Gamma$-objects in $n$-fold simplicial sets (i.e.~things that ``want'' to be symmetric monoidal $n$-categories). This interpretation is closest to Segal's theory of $\Gamma$-spaces \cite{segal}.
	\end{rmk}
	
	\section{The Geometry of $\frB^n\fun$}
	
	One checks immediately that the ``bar construction'' of $\fun$ is precisely the simplicial circle.
	
	\begin{lem}\label{lem:BF1 is S1}
		The simplicial set $\mathtt{s}^\ast\fun=\mathfrak{B}\fun\langle 1\rangle$ is equal to the simplicial circle $\mathfrak{S}^1=\Delta^1/\partial\Delta^1$.
	\end{lem}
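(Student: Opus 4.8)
The plan is to unwind the definition of $\mathfrak{B}$ and to observe that, once we evaluate at $\langle 1\rangle$ in the $\Gamma$-coordinate, nothing survives but the functor $\mathtt{s}$ itself. Concretely, for any $\fun$-module $M$ and any $[n]\in\Delta^{\op}$, the definition of $\mathfrak{B}$ as the pullback of $M$ along $\wedge\circ(\mathtt{s}\times\mathrm{id})$ gives $\mathfrak{B}M([n],-)=M(\mathtt{s}([n])\wedge -)$, so that $\mathfrak{B}M\langle 1\rangle([n])=M(\mathtt{s}([n])\wedge\langle 1\rangle)$. Since $\langle 1\rangle$ is the monoidal unit for the smash product of finite pointed sets, the unit isomorphism $\mathtt{s}([n])\wedge\langle 1\rangle\cong\mathtt{s}([n])$ is natural in $[n]$, and with the standard identifications of skeletal finite pointed sets it is an equality; hence $\mathfrak{B}\fun\langle 1\rangle=\mathtt{s}^\ast\fun$, which is the first assertion of the lemma.

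It then remains to identify $\mathtt{s}^\ast\fun$ with $\Delta^1/\partial\Delta^1$. Here I would use that $\fun$ is, by definition, the inclusion $\Fin_\ast\hookrightarrow\Set_\ast$, so that $\mathtt{s}^\ast\fun=\fun\circ\mathtt{s}$ is just $\mathtt{s}$ regarded as a pointed simplicial set; and $\mathtt{s}$ was defined precisely as the corestriction of $\Delta^1/\partial\Delta^1\colon\Delta^{\op}\to\Set_\ast$ to $\Fin_\ast$, so postcomposing with the inclusion recovers $\Delta^1/\partial\Delta^1$ on the nose. To make this airtight I would also recall the bijection $\mathtt{s}([n])=(\Delta^1/\partial\Delta^1)_n\cong\langle n\rangle$ --- a monotone map $[n]\to[1]$ is recorded by the index at which it jumps from $0$ to $1$, with the two constant maps identified to the basepoint --- and check that the induced face and degeneracy maps are exactly those of $\Delta^1/\partial\Delta^1$, which is immediate from how $\mathtt{s}$ acts on morphisms.

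The only real work is bookkeeping: using the unit coherence isomorphism consistently so that the final statement is an honest equality of functors $\Delta^{\op}\to\Set_\ast$ rather than merely an isomorphism, and keeping straight the two appearances of $\Delta^1/\partial\Delta^1$ (as the simplicial set defining $\mathtt{s}$ and as $\mathfrak{S}^1$). There is no mathematical obstacle beyond chasing definitions; the lemma is essentially a sanity check that the new delooping functor restricts, at the $\langle 1\rangle$-level, to the ordinary bar construction.
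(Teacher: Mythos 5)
Your proof is correct and is exactly the definition-chase the paper intends: the paper's own proof is the one-line ``follows from the definitions of $\mathtt{s}$ and $\fun$,'' and you have simply spelled out those definitions (unitality of $\langle 1\rangle$ for the smash product, $\fun$ being the inclusion $\Fin_\ast\hookrightarrow\Set_\ast$, and $\mathtt{s}$ being the corestriction of $\Delta^1/\partial\Delta^1$). No differences in approach worth noting.
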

	
	\begin{proof}
		This follows from the definitions of $\mathtt{s}$ and $\fun$.
	\end{proof}
	
	\begin{rmk}\label{rmk:BF1 as bar construction}
		There is another way to interpret this result. From \cite[Proposition 4.19]{beardsley-nakamura--projective} we have that $\mathtt{s}^\ast\fun$ is isomorphic to Segal's bar construction for the partial monoid $\mathbb{F}$ with two elements $\{0,1\}$ in which $1+1$ is undefined (see \cite{segal-config_spaces}). Moreover, Segal remarks therein that the bar construction of a \textit{free} partial monoid, i.e.~one in which the only possible binary additions are the ones involving the identity element, agrees with the reduced suspension. In other words, $\mathfrak{B}\fun\langle 1\rangle$ is the bar construction of $\mathbb{F}$ and therefore isomorphic to $\Sigma S^0$.
	\end{rmk}
	
	\begin{defn}
		Write $\Gamma(n)\colon \Fin_\ast\to\Set_\ast$ for the functor corepresented by the object $\langle n\rangle\in\Fin_\ast$.
	\end{defn}
	
	\begin{lem}
		There is a canonical isomorphism between $\Gamma(n)$ and the $n$-fold product $\fun\times\fun\times\cdots\times\fun$.
	\end{lem}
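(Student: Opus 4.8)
The plan is to observe that $\fun$ is itself corepresentable and then exploit the behaviour of corepresentable functors on coproducts. First I would recall that, by definition, $\fun$ is the inclusion $\Fin_\ast\hookrightarrow\Set_\ast$, and that for a finite pointed set $\langle m\rangle$ the pointed maps $\langle 1\rangle\to\langle m\rangle$ are in natural bijection with the elements of $\langle m\rangle$ (a pointed map is determined by, and freely specified by, the image of the non-basepoint $1\in\langle 1\rangle$). Hence the Yoneda lemma gives a canonical isomorphism $\fun\cong\Gamma(1)$, the functor corepresented by $\langle 1\rangle$.

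Next I would note that in $\Fin_\ast$ the object $\langle n\rangle$ is canonically the $n$-fold coproduct $\langle 1\rangle^{\vee n}$: coproduct of pointed sets is the wedge sum, and wedging together $n$ copies of $\{\ast,1\}$ produces a pointed set with exactly $n$ non-basepoint elements, i.e.~$\langle n\rangle$. Since the covariant hom $\mathrm{Hom}_{\Fin_\ast}(-,\langle m\rangle)$ carries coproducts in its first variable to products, we obtain, naturally in $\langle m\rangle$,
\[
\Gamma(n)(\langle m\rangle)=\mathrm{Hom}_{\Fin_\ast}(\langle 1\rangle^{\vee n},\langle m\rangle)\cong\mathrm{Hom}_{\Fin_\ast}(\langle 1\rangle,\langle m\rangle)^{\times n}=\Gamma(1)(\langle m\rangle)^{\times n}.
\]
Because limits in a category of pointed functors into $\Set_\ast$ are computed objectwise, the right-hand side is precisely $\bigl(\Gamma(1)^{\times n}\bigr)(\langle m\rangle)$, so the $n$-fold product $\fun\times\fun\times\cdots\times\fun$ in $\Mod_{\fun}$. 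Unwinding, the comparison sends a pointed map $f\colon\langle n\rangle\to\langle m\rangle$ to the tuple $(f(1),\dots,f(n))$ of images of the non-basepoint elements; this is visibly a bijection for every $\langle m\rangle$, and natural in $\langle m\rangle$ by construction.

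The argument has essentially no hard part; the only things to check are bookkeeping. One must confirm that the wedge decomposition $\langle n\rangle\cong\langle 1\rangle^{\vee n}$ is the evident one and that basepoints match (the constant map at $\ast$ corresponds to the basepoint tuple $(\ast,\dots,\ast)$, which is the basepoint of the product), and one must verify that the resulting comparison is a morphism of $\fun$-modules rather than merely of underlying pointed sets — but this is immediate, since every step above is natural in $\langle m\rangle$ and the $\fun$-module structure on each side \emph{is} exactly that functoriality. Canonicity of the isomorphism follows from the canonicity of the coproduct decomposition of $\langle n\rangle$ together with the canonicity of the Yoneda identification $\fun\cong\Gamma(1)$.
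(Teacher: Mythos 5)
Your proof is correct and follows the same route as the paper: identify $\fun$ with the corepresentable $\Gamma(1)$ via Yoneda, decompose $\langle n\rangle$ as the $n$-fold wedge of $\langle 1\rangle$, and use that corepresentables turn coproducts in the corepresenting object into products, computed objectwise in the functor category. The paper states these three facts and stops; you have simply unwound them at the level of elements, which is a fine (and slightly more explicit) presentation of the identical argument.
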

	
	\begin{proof}
		This follows (after carefully taking opposite categories) from the following facts: the Yoneda embedding preserves limits; $\fun$ is canonically isomorphic to $\Gamma(1)$; and $\langle n\rangle$ is canonically isomorphic to the $n$-fold coproduct $\langle 1\rangle\vee\cdots\vee\langle 1\rangle$ in $\Fin_\ast$.
	\end{proof}
	
	Because $\mathfrak{B}$ is a right adjoint, it preserves products. Moreover, products of $\fun$-modules are computed objectwise. Therefore understanding deloopings of the corepresentables $\Gamma(n)$ reduces to understanding the deloopings of $\fun$. In particular, the following corollary to the above lemmas is immediate.
	
	\begin{cor}\label{cor:barconstructionofreps}
		The simplicial set~$\mathtt{s}^\ast\Gamma(n)=\mathtt{s}^\ast \fun^n=\frB\fun^n\ang{1}$ is isomorphic to the simplicial $n$-torus $\prod_n\Delta^1/\partial\Delta^1$.
	\end{cor}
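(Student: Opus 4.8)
The plan is to assemble the statement from the two preceding lemmas together with the fact that restriction functors between presheaf categories preserve products. First I would record the identification $\mathtt{s}^\ast\Gamma(n)=\frB\Gamma(n)\ang{1}$: by definition $\frB$ is restriction along $\mathtt{s}\times\mathrm{id}$ composed with the smash $\wedge\colon\Fin_\ast\times\Fin_\ast\to\Fin_\ast$, and evaluating the resulting simplicial $\fun$-module at $\ang{1}$ --- which is the unit for $\wedge$ --- returns exactly the restriction of $\Gamma(n)$ along $\mathtt{s}$, i.e.~the bar construction $\mathtt{s}^\ast\Gamma(n)$. So it suffices to compute $\mathtt{s}^\ast\Gamma(n)$.

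Next I would invoke the lemma identifying the corepresentable $\Gamma(n)$ with the $n$-fold product $\fun^n$ in $\Mod_{\fun}$. Since $\mathtt{s}^\ast\colon\Mod_{\fun}\to s\Mod_{\fun}$ is restriction along $\mathtt{s}\colon\Delta^{\op}\to\Fin_\ast$, and limits in functor categories (in particular finite products) are computed pointwise, $\mathtt{s}^\ast$ preserves finite products; hence there is a canonical isomorphism $\mathtt{s}^\ast\Gamma(n)\cong\mathtt{s}^\ast(\fun^n)\cong(\mathtt{s}^\ast\fun)^n$. Equivalently, one can route this through the observation recorded just before the statement that $\frB$ is a right adjoint and that products of $\fun$-modules are computed objectwise. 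Finally, Lemma \ref{lem:BF1 is S1} identifies $\mathtt{s}^\ast\fun$ with the simplicial circle $\mathfrak{S}^1=\Delta^1/\partial\Delta^1$, and the $n$-fold product $(\Delta^1/\partial\Delta^1)^n$ is by definition the simplicial $n$-torus $\prod_n\Delta^1/\partial\Delta^1$. Chaining these canonical isomorphisms gives the result.

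I do not expect a substantive obstacle here --- as the text preceding the statement notes, the corollary is immediate. The only points needing a moment's attention are bookkeeping: keeping track of the $\Gamma^{\op}\simeq\Fin_\ast$ opposite-category conventions when quoting the identification $\Gamma(n)\cong\fun^n$, and noting that because the delooping is iterated zero times here the relevant product is taken in ordinary simplicial sets $s\Set_\ast$, so no passage to $n$-fold simplicial sets, and hence no diagonal or geometric realization, is involved. Since every isomorphism in the chain is canonical and natural, these conventions introduce no genuine choices.
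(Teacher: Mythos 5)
Your argument is correct and matches the paper's: the corollary is stated as immediate from the preceding lemmas precisely because $\frB$ (equivalently, restriction along $\mathtt{s}$) preserves the objectwise-computed products, $\Gamma(n)\cong\fun^n$, and $\mathtt{s}^\ast\fun\cong\Delta^1/\partial\Delta^1$. Your bookkeeping remarks about the unit $\ang{1}$ and the pointwise computation of limits are exactly the justifications the paper leaves implicit.
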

	
	In light of Remark \ref{rmk:reps of sn F1 mods}, we can consider $\mathfrak{B}\fun$ as a functor $ \Fin_\ast\to\Spaces_\ast$. Taking the comparison to $\Gamma$-spaces seriously, we might be inclined to think of this as a simplicial set (either as a space or a category), $\mathfrak{B}\fun\langle 1\rangle$, with \textit{some kind of} commutative monoid structure. First, note that from Lemma \ref{lem:BF1 is S1}, it's clear that $\mathfrak{B}\fun\langle 1\rangle$ is neither a Kan complex nor a quasicategory. There is no simplex describing how to compose the unique non-degenerate 1-simplex with itself, which is consistent with Remark \ref{rmk:BF1 as bar construction}. We can think of it as the nerve of the \textit{partial category} with one object and one non-identity morphism whose composition with itself is undefined.
	
	In the ``algebraic'' direction, the functor $\mathfrak{B}\fun\colon \Fin_\ast\to\Spaces_\ast$ is not \textit{special} in the sense of \cite{segal}, or in modern parlance, does not \textit{satisfy the Segal condition}. It does not define a commutative monoid object in $\Spaces_\ast$, even up to homotopy. However, its abortive commutative monoid structure does not fail \textit{that badly}. It still has a tensor product, it's just not defined on every simplex. These impressions are made precise by the following computations. First, we determine the underlying $n$-fold simplicial set of $\mathfrak{B}^n\fun$.
	
	\begin{prop}\label{prop:wedgeproductdelooping}
		The  $n$-fold simplicial set $\frB^n\fun\ang{k}$ is isomorphic to the $k$-fold wedge product $\bigvee_k \frB^n\fun\ang{1}$.
	\end{prop}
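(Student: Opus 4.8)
The plan is to make the iterated delooping explicit and then invoke distributivity of the smash product over the wedge. The first step is to identify $\frB^n\fun$, viewed as a functor $(\Delta^{\op})^n\times\Fin_\ast\to\Set_\ast$, by induction on $n$. By the defining pullback, $\frB\fun([m],\ang{j})=\fun(\mathtt{s}([m])\wedge\ang{j})$; each further application of $\frB$ leaves the simplicial coordinates already present untouched, introduces one new $\Delta^{\op}$-coordinate, and smashes one more copy of $\mathtt{s}$ into the $\Gamma$-argument. Hence
\[
\frB^n\fun([m_1],\dots,[m_n],\ang{j})\;=\;\fun\bigl(\mathtt{s}([m_1])\wedge\cdots\wedge\mathtt{s}([m_n])\wedge\ang{j}\bigr),
\]
and since $\fun$ is by definition the inclusion $\Fin_\ast\hookrightarrow\Set_\ast$, the right-hand side is literally the pointed set $\mathtt{s}([m_1])\wedge\cdots\wedge\mathtt{s}([m_n])\wedge\ang{j}$. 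The content of this step is the bookkeeping needed to check that the simplicial operators in the old coordinates and the new ones contributed by $\mathtt{s}$ at each stage behave as claimed; this is immediate from the definition of $\frB$ as a pullback.

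Now fix the $\Gamma$-coordinate to be $\ang{k}$ and use the canonical decomposition $\ang{k}\cong\bigvee_{i=1}^{k}\ang{1}$ in $\Fin_\ast$, under which $\ang{k}=\{\ast,1,\dots,k\}$ is the wedge of its $k$ singletons $\{\ast,i\}\cong\ang{1}=S^0$. Smash products of pointed sets distribute over wedges in each variable, and $\ang{1}=S^0$ is the monoidal unit, so for every multidegree $([m_1],\dots,[m_n])$ there is an isomorphism
\[
\mathtt{s}([m_1])\wedge\cdots\wedge\mathtt{s}([m_n])\wedge\ang{k}\;\cong\;\bigvee_{i=1}^{k}\bigl(\mathtt{s}([m_1])\wedge\cdots\wedge\mathtt{s}([m_n])\bigr),
\]
assembled from the distributivity and unit constraints of the smash product. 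As those constraints are natural, the displayed isomorphism is natural in each $[m_i]$.

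Finally, wedge products in $s^n\Set_\ast$ are formed multidegreewise, so naturality of the displayed isomorphisms in the $[m_i]$ promotes them to an isomorphism of $n$-fold simplicial pointed sets between $\frB^n\fun\ang{k}$ and the object whose multidegree-$([m_1],\dots,[m_n])$ part is $\bigvee_{i=1}^{k}\frB^n\fun\ang{1}$ --- that is, exactly the $k$-fold wedge $\bigvee_k\frB^n\fun\ang{1}$. The argument is purely formal once the first step is in place; the only point requiring genuine care is the inductive identification of $\frB^n\fun$, where one must keep the $n$ simplicial coordinates and the $\Gamma$-coordinate straight through each application of the pullback functor $\frB$.
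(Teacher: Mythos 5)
Your proof is correct and is essentially the paper's argument: both reduce the claim to the fact that the smash product of finite pointed sets distributes over wedges, applied to the decomposition $\ang{k}\cong\bigvee_k\ang{1}$. The only difference is presentational --- you unroll the iterated pullback to the explicit formula $\frB^n\fun([m_1],\dots,[m_n],\ang{j})\cong\mathtt{s}([m_1])\wedge\cdots\wedge\mathtt{s}([m_n])\wedge\ang{j}$ before invoking distributivity, whereas the paper argues by induction that each stage of the defining composite preserves coproducts in the $\Fin_\ast$-coordinate.
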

	
	\begin{proof}
		We prove the proposition by induction. For $n=0$ it is clearly true, since $\fun\ang{k}=\ang{k}\cong\vee_k\ang{1}$. Indeed, since $\ang{k}=\vee_{k}\ang{1}$, it suffices to show that $\frB^n\fun\colon \Fin_\ast\to s^n\Set_\ast$ preserves finite coproducts in $\finpSet$. By currying, this is equivalent to asking whether or not the associated functor $(\Delta^{\op})^n\times\finpSet\to\pSet$ preserves finite coproducts in the right-hand coordinate. Suppose that this is the case for $\frB^{n-1}\fun\colon (\Delta^{\op})^{n-1}\times\finpSet\to\pSet$. We then construct $\frB \frB^{n-1}\fun=\frB^n\fun$ as the following composite:
		\[(\Delta^{\op})^n\times\Delta^{\op}\times\finpSet\xrightarrow{id\times\mathtt{s}\times id}(\Delta^{\op})^n\times\finpSet\times\finpSet\xrightarrow{id\times\wedge}(\Delta^{\op})^n\times\finpSet\xrightarrow{\frB^{n-1}\fun}\pSet\]
		However, every functor applied to the right hand copy of $\finpSet$ in the above composite preserves coproducts in it, since $\finpSet$ is closed with respect to the smash product.
	\end{proof}
	
	\begin{lem}
		If $X\in s^n\Mod_{\fun}$, thought of as a functor $(\Delta^{\op})^n\times\finpSet\to\Set_\ast$, and $X$ is the one element set $\{\ast\}$ for every multi-degree $([k_1,\ldots,k_n],\ang{k_{n+1}})$ with $k_i=0$ for at least one $i$, then $\frB X\colon (\Delta^{\op})^{n+1}\times\finpSet\to\pSet$ is the one point set for every multidegree $([j_1,\ldots,j_n,j_{n+1}],\ang{j_{n+2}})$ whenever $j_i=0$ for at least one $i$.
	\end{lem}

	\begin{proof}
		Considering the composite from the preceding proof and noticing that $\ang{k}\wedge\ang{0}=\ang{0}$ for all $k$ completes the proof.
	\end{proof}
	
	\begin{prop}\label{prop: structure of BnF1}
		The $n$-fold simplicial set $\frB^n\fun\ang{1}$ is a singleton set in every multidegree $[k_1,\ldots,k_n]$ with $k_i=0$ for at least one $i$, has exactly two elements, one of which is non-degenerate, in multidegree $[1,1,\ldots,1]$, and has all other multisimplices degenerate.
	\end{prop}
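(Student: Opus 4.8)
The plan is to identify $\frB^n\fun\ang{1}$ with an $n$-fold external smash power of the simplicial circle $\mathfrak{S}^1=\Delta^1/\partial\Delta^1$ and then read off the three assertions from the elementary structure of $\mathfrak{S}^1$. First I would unwind the definition of $\frB$. Since $\frB Y([m],\ang{k})=Y(\mathtt{s}([m])\wedge\ang{k})$ and each application of $\frB$ adjoins a fresh $\mathtt{s}$-variable (as in the composite used in the proof of Proposition~\ref{prop:wedgeproductdelooping}), an immediate induction on $n$ gives, for every multidegree $[m_1,\dots,m_n]$,
\[
\frB^n\fun\ang{1}[m_1,\dots,m_n]\;=\;\fun\bigl(\mathtt{s}([m_1])\wedge\cdots\wedge\mathtt{s}([m_n])\wedge\ang{1}\bigr)\;=\;\mathtt{s}([m_1])\wedge\cdots\wedge\mathtt{s}([m_n]),
\]
the last equality because $\fun$ is the inclusion $\Fin_\ast\hookrightarrow\Set_\ast$ and $\ang{1}$ is the smash unit; moreover the $i$-th family of face and degeneracy operators acts through $\mathtt{s}\colon\Delta^{\op}\to\Fin_\ast$ on the $i$-th smash factor only. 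By Lemma~\ref{lem:BF1 is S1}, $\mathtt{s}^\ast\fun=\mathfrak{S}^1$, so this says precisely that $\frB^n\fun\ang{1}$ is the external smash product of $n$ copies of $\mathfrak{S}^1$.

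Next I would recall the structure of $\mathfrak{S}^1$: in degree $0$ it is a point; in degree $1$ it has two elements, the basepoint and one non-degenerate edge $e$; and in every degree $\geq 2$ every simplex is degenerate (the non-basepoint ones being the degeneracies of $e$). Combine this with the general fact that a multisimplex of an external smash product of pointed multi-simplicial sets $A_1,\dots,A_n$ in multidegree $[m_1,\dots,m_n]$ is a tuple $(a_1,\dots,a_n)$ with $a_i\in(A_i)_{m_i}$, where any tuple having some $a_i$ a basepoint is itself the basepoint, a non-basepoint such tuple is degenerate in the $i$-th direction exactly when $a_i$ is degenerate in $A_i$, and the basepoint multisimplex is degenerate whenever some $m_i\geq 1$. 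The three claims then follow by a case analysis on $[m_1,\dots,m_n]$. If some $m_i=0$ then $(\mathfrak{S}^1)_{m_i}$ is a point, so that entire multidegree consists of the single basepoint element. In multidegree $[1,\dots,1]$ the unique non-basepoint element is $(e,\dots,e)$, and it is non-degenerate, because any degeneracy operator landing in multidegree $[1,\dots,1]$ has source a multidegree with a zero entry, hence containing only the basepoint, which degeneracies send to the basepoint. In any remaining multidegree, i.e.\ all $m_i\geq 1$ with some $m_j\geq 2$, the basepoint is degenerate and every non-basepoint tuple $(a_1,\dots,a_n)$ has $a_j$ degenerate in $\mathfrak{S}^1$ (since $m_j\geq 2$), hence is degenerate in the $j$-th direction. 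The same bookkeeping simultaneously identifies $\frB^n\fun\ang{1}$ with $\square^n/\partial\square^n=\mathfrak{S}^n$.

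The only step that is not completely routine is the first one, and even there the difficulty is bookkeeping rather than mathematics: one must check not merely that the underlying sets $\frB^n\fun\ang{1}[m_1,\dots,m_n]$ are the claimed smash products, but that all $n$ families of simplicial operators are the expected ones — that iterating the pullback along $\mathtt{s}\wedge(-)$ really does glue on a fresh $\mathfrak{S}^1$-coordinate at each stage and leaves the earlier coordinates untouched. This is the same argument, already used to prove Proposition~\ref{prop:wedgeproductdelooping}, that every functor occurring in the defining composite preserves the relevant structure in the relevant variable, now applied to the full simplicial structure rather than just to coproducts. Alternatively one could avoid the explicit smash description and run an induction on $n$, using Proposition~\ref{prop:wedgeproductdelooping} to rewrite $\frB^n\fun\ang{1}$ at $[m_1,\dots,m_{n-1},m_n]$ as the $m_n$-fold wedge of $\frB^{n-1}\fun\ang{1}$ at $[m_1,\dots,m_{n-1}]$ and then treating the new last simplicial coordinate separately via Lemma~\ref{lem:BF1 is S1}; the bookkeeping burden is the same.
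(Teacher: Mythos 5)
Your proof is correct, and it takes a somewhat different route from the paper's. The paper argues all three claims by induction on $n$, citing the two preceding propositions (the singleton behaviour when some $k_i=0$, and the wedge decomposition $\frB^n\fun\ang{k}\cong\vee_k\frB^n\fun\ang{1}$) together with the observation $\frB^n\fun\ang{1}=\mathtt{s}^\ast\frB^{n-1}\fun$; it never writes down a closed formula. You instead unwind the defining composite once and for all to get the explicit description
$\frB^n\fun\ang{1}[m_1,\dots,m_n]=\mathtt{s}([m_1])\wedge\cdots\wedge\mathtt{s}([m_n])$,
i.e.\ the external smash power of $n$ copies of $\mathfrak{S}^1$, and then read off all three assertions by an elementary case analysis on the multidegree. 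The two arguments rest on the same underlying observation (each application of $\frB$ is pullback along $\mathtt{s}\wedge(-)$, so it adjoins one fresh $\mathfrak{S}^1$-coordinate), and you correctly flag that the only real work is checking that the simplicial operators in the $i$-th direction act only through the $i$-th smash factor — which is the same ``every functor in the composite preserves the relevant structure in the relevant variable'' bookkeeping the paper uses for Proposition~\ref{prop:wedgeproductdelooping}. What your version buys is a uniform closed form that delivers the subsequent Corollary ($\frB^n\fun\ang{1}\cong\mathfrak{S}^n$) in the same breath and makes the non-degeneracy of the unique top cell transparent (any degeneracy into $[1,\dots,1]$ has source a multidegree with a zero entry); the paper's version stays closer to the already-established propositions and avoids introducing the external smash product explicitly.
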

	
	\begin{proof}
		The first claim follows by induction from the preceding proposition. The second claim follows from induction and noticing that $\frB^n\fun\ang{1}=\mathtt{s}^\ast \frB^{n-1}\fun$. The last claim follows from Proposition \ref{prop:wedgeproductdelooping} and induction.
	\end{proof}
	
	\begin{cor}
		The $n$-fold simplicial set $\frB^n\fun\langle 1\rangle$, i.e.~the bar construction $\mathtt{s}^\ast\frB^{n-1}\fun$, is isomorphic to the $n$-fold simplicial $n$-sphere $\mathfrak{S}^n$.
	\end{cor}
	
	%	\begin{proof}
		%		The free $n$-fold simplicial set on the $n$-cube is the $n$-fold simplicial set with a unique non-degenerate simplex in degree $[1,1,\ldots,1]$ with faces and degeneracies in other degrees freely generated. Taking the quotient by all of the faces produces the $n$-fold simplicial set of Proposition \ref{prop: structure of BnF1}.
		%	\end{proof}
	
	\begin{thm}\label{thm:realization of BnF1 is nsphere}
		The geometric realization of the $n$-fold simplicial set $\mathfrak{B}^n\fun\ang{1}$ is homeomorphic to $S^n=D^n/\partial D^n$.
	\end{thm}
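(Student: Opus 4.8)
The plan is to start from the corollary immediately preceding the theorem, which identifies $\mathfrak{B}^n\fun\ang{1}$ with $\mathfrak{S}^n=\square^n/\partial\square^n$, so that it suffices to show $|\mathfrak{S}^n|\cong S^n$. I would compute this realization directly, using two structural facts about realization of $n$-fold simplicial sets: first, $|X|$ can be computed as the geometric realization of the diagonal simplicial set of $X$ (the paper already treats ``geometric realization'' and ``the diagonal'' as interchangeable in this context); and second, both $\mathrm{diag}\colon s^n\Set\to\sSet$ and $|-|\colon\sSet\to\mathrm{Top}$ are left adjoints, hence preserve all colimits, while $|-|$ moreover commutes with finite products of the (compact CW) spaces that occur below. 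Since $\square^n$ is by definition the free $n$-fold simplicial set on a generator in multidegree $[1,\dots,1]$, it is the $n$-fold external product $\Delta^1\boxtimes\cdots\boxtimes\Delta^1$; its diagonal is therefore the $n$-fold cartesian product $\Delta^1\times\cdots\times\Delta^1$ in $\sSet$, and hence $|\square^n|\cong|\Delta^1|^n = I^n$, the topological $n$-cube, with $I=[0,1]$.

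Next I would identify the realization of the subobject $\partial\square^n\hookrightarrow\square^n$. By definition $\partial\square^n$ is the union of the $2n$ codimension-one faces of the cube, each a copy of $\square^{n-1}$ included by fixing one of the $n$ cube coordinates at its initial or terminal vertex. Passing to diagonals, this becomes the subcomplex of $\Delta^1\times\cdots\times\Delta^1$ consisting of those multisimplices at least one of whose coordinates factors through a vertex $\{0\}$ or $\{1\}$ of $\Delta^1$; under the homeomorphism $|\square^n|\cong I^n$ this realizes precisely to the topological boundary $\partial I^n=\{x\in I^n : x_i\in\{0,1\}\text{ for some }i\}$. Equivalently, geometric realization carries the boundary inclusion of the external product of standard $1$-simplices to the boundary inclusion of the product of intervals.

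Since $\mathfrak{S}^n$ is the pushout of $\ast\leftarrow\partial\square^n\hookrightarrow\square^n$ in $s^n\Set_\ast$, and realization (via the diagonal) preserves this pushout, we obtain
\[
|\mathfrak{B}^n\fun\ang{1}|\;\cong\;|\mathfrak{S}^n|\;\cong\;|\square^n|/|\partial\square^n|\;\cong\;I^n/\partial I^n .
\]
Finally, any homeomorphism of pairs $(I^n,\partial I^n)\cong(D^n,\partial D^n)$ — for instance the radial one — gives $|\mathfrak{B}^n\fun\ang{1}|\cong D^n/\partial D^n=S^n$, as claimed.

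I do not expect a genuine obstacle; there are two standard points that must be invoked with a little care. The first is that geometric realization of multisimplicial sets commutes with the relevant finite products, which is the usual fact that $|-|$ is product-preserving on compactly generated spaces combined with the diagonal model of multisimplicial realization — this is what pins $|\square^n|$ to the honest cube $I^n$ rather than to some merely weakly equivalent complex. The second, which is the only place one actually inspects the face structure, is the combinatorial bookkeeping identifying $\partial\square^n$ after realization with the topological boundary of that cube. Everything else is formal: left adjoints preserve colimits, and a pushout of a monomorphism against a point is the corresponding quotient space.
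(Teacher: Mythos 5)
Your proposal is correct and follows essentially the same route as the paper's own (much terser) proof: identify $\mathfrak{B}^n\fun\ang{1}$ with $\mathfrak{S}^n=\square^n/\partial\square^n$, realize the cube to $[0,1]^n$, and use that realization preserves the quotient by the boundary. Your write-up simply makes explicit the diagonal/product-preservation bookkeeping that the paper leaves implicit.
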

	
	\begin{proof}
		The geometric realization of the free $n$-cube is homeomorphic to the space $[0,1]^n$. The result then follows from the preceding results and the fact that geometric realization preserves colimits (and all quotients here are strict).
	\end{proof}
	
	In other words, we have computed the underlying $n$-fold simplicial sets of all deloopings $\mathfrak{B}^n\fun$. Explicitly considering the multisimplices of $\mathfrak{S}^n$, we see that this may be interpreted as the free partial strict $n$-category with one object and one non-degenerate $n$-morphism. Note that this is $n$-category is only \textit{partial} because there are no higher multisimplices describing composition (except with degenerate morphisms). This is consistent with the fact that the $n$-fold delooping of a commutative monoid is the one object $n$-category whose only non-identity morphisms are the monoid itself acting as $n$-dimensional endomorphisms of the unique object.
	
	\begin{rmk}\label{rmk:bigrmk}
		In light of Remark \ref{rmk:foldbar as Kan extension}, there is a way to understand Theorem \ref{thm:realization of BnF1 is nsphere} as a universal construction. For simplicity of exposition, let us assume that we've taken coherent nerves throughout so that $\Spaces_\ast$ now denotes the $\infty$-category of pointed anima. As such, we can write $X\in\Spaces_\ast$ as a functor $\Fin_\ast^{\leq 1}\to \Spaces_\ast$ and, again, left Kan extend along the inclusion $\Fin_\ast^{\leq 1}\hookrightarrow\Fin_\ast$ to obtain the functor $X^\vee\colon \Fin_\ast\to\Spaces_\ast$ which takes $\langle n\rangle$ to $\vee_{n}X$ and uses the unit, projection, and fold maps for the morphisms in $\Fin_\ast$. For $X=S^0$ this is precisely $\fun$.
		
		Now suppose we wish to obtain a spectrum object from $X$, specifically its suspension spectrum $\Sigma^\infty X$. We can once more left Kan extend, but now along the inclusion $\Fin_\ast\hookrightarrow\Spaces_\ast^{fin}$ of finite pointed sets into finite pointed anima. Note that the resulting functor is not excisive (in the sense of \cite[Definition 1.4.2.1]{ha}) and so does not define a spectrum object (in the sense of \cite[Definition 1.4.2.8]{ha}). When applied to the $n$-spheres $S^n$, this functor can be computed to give $\Sigma^n X$. The algebraic structure on each $\Sigma^nX$ is still the fold map $\Sigma^nX\vee\Sigma^nX\to\Sigma^nX$. Note that by forcing our functor to land in anima rather than a multisimplicial extension, we have already group completed in the categorical direction and so have lost the directedness and partialness of Proposition \ref{prop: structure of BnF1}.
		
		Group completing in the \textit{algebraic} direction corresponds to applying the excisive approximation, or spectrification, functor $\Omega^\infty\Sigma^\infty$ of \cite[Example 6.1.1.23]{ha}. This gives the excisive functor, i.e.~homology theory, $\Spaces_\ast^{fin}\to\Spaces_\ast$ which takes $S^n$ to $\Omega^{\infty-n}\Sigma^\infty X\simeq B^nQX$. The morphisms in $\Fin_\ast$ encode the infinite loop space structure of $\Omega^\infty\Sigma^\infty X$. Finally, we have obtained an honest spectrum, i.e. a very special $\Gamma$-space; specifically, this is the $\Omega$-spectrum corresponding to the suspension spectrum $\Sigma^\infty X$. Applying this sequence of left adjoints to to $S^0$ in particular ultimately gives $\mathbb{S}$.
	\end{rmk}

	%	Now we would like to say something about the (partial) additive structure of $\mathfrak{B}^n\fun$. The pith of this structure is of course found in the active morphisms of $\Fin_\ast$ that are in the image of face maps under $\mathtt{s}$. As such, it suffices to consider $\mathtt{s}^\ast\frB^n\fun=\frB^{n+1}\fun\ang{1}$.
	
	\section{The Algebraic Structure of $\frB^n\fun$}
	
	Recall that the $n$-fold delooping of a commutative monoid $M$ is not just an $n$-category but an $n$-category with a canonical symmetric monoidal structure. The symmetric monoidal structure, however, is ``uninteresting'' in the sense that it is identical to the addition law of the monoid, which is in turn identical to the composition law for $n$-morphisms. Symbolically, given two $n$-morphisms $f$ and $g$ in $B^nM$, we have $f\circ g=f+g=f\otimes g$. We now see that the deloopings $\mathfrak{B}^n\fun$ encode the analogous algebraic structure on $\mathfrak{B}^n\fun\langle 1\rangle$ though, again, it is only partially defined.
	
	% The short explanation is that $\frB^n\fun$ is the functor $\Fin_\ast\to s^n\Set_\ast$ that picks out the free partial monoid structure on (an $n$-fold simplicial model of) the $n$-sphere. It geometrically realizes to the fold map $S^n\vee S^n\to S^n$ given by the identity on each summand. This should be thought of as the subspace of $S^n\times S^n$ consisting of points of the form $(0,x)$ and $(x,0)$. The fold map then corresponds to performing addition, but addition is only defined when one summand is the identity.
	
	% First, we notice that $\mathfrak{B}^n\fun$ can be included into a more familiar ``infinite loop space.'' We use that the functor $\mathfrak{B}$ clearly geometrically realizes to Segal's delooping functor $B$ \cite{segal}.
	
	\begin{defn}
		Let $A$ be an Abelian group. Then we write $\mathfrak{K}(A,n)$ for the $n$-fold simplicial $\Gamma$-set $\mathfrak{B}^nHA$. Equivalently, in the language of Remark \ref{rmk:bigrmk}, this is the value of the spectrum $HA\colon\Spaces_\ast^{fin}\to\Spaces_\ast$ evaluated at $S^n$.
	\end{defn}
	
	Note that $\mathfrak{K}(A,1)$ is precisely the bar construction $BA$ with its usual symmetric monoidal structure (as a functor out of $\Fin_\ast$). More generally, $\mathfrak{K}(A,n)$ is the ($n$-simplicial nerve of the) strict $n$-groupoid whose only non-identity morphisms are the $n$-morphisms $A$.
	
	\begin{prop}
		For all $n$ there is a monomorphism of $n$-fold simplicial $\Gamma$-sets $\iota_n\colon \mathfrak{B}^n\fun\hookrightarrow \mathfrak{K}(\ZZ/2,n)$.
	\end{prop}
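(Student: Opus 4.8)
The plan is to obtain $\iota_n$ by applying the functor $\mathfrak{B}^n\widehat{H}$ to a single morphism of commutative plasmas and checking that the result is still a monomorphism. Using $\fun\cong\widehat{H}\mathbb{F}$ (quoted above) together with the definition $\mathfrak{K}(\ZZ/2,n)=\mathfrak{B}^n\widehat{H}(\ZZ/2)$, it suffices to produce a monomorphism $\mathbb{F}\to\ZZ/2$ in $\mathrm{CPlas}$ and to know that both $\widehat{H}$ and $\mathfrak{B}$ preserve monomorphisms.

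First I would check that the identity function on the common underlying set $\{0,1\}$ is a morphism of commutative plasmas $f\colon\mathbb{F}\to\ZZ/2$. The only nontrivial instance of the defining containment $f(x\boxplus y)\subseteq f(x)\star f(y)$ occurs at $x=y=1$, where the left-hand side is $f(\varnothing)=\varnothing$ and is therefore contained in $1\star 1=\{0\}$; the instances involving $0$ hold because $0$ is a strict identity on both sides (Example \ref{examples}). Since the forgetful functor $\mathrm{CPlas}\to\Set$ is faithful and $f$ is underlain by an injection, $f$ is a monomorphism in $\mathrm{CPlas}$. It is worth recording that there is no plasma morphism $\ZZ/2\to\mathbb{F}$ covering the identity, since $1\star 1=\{0\}$ cannot be contained in $1\boxplus 1=\varnothing$; it is precisely the one-sidedness of the morphism condition that picks out the correct direction.

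Now $\widehat{H}$ is a right adjoint (quoted above), hence preserves monomorphisms, so $\widehat{H}(f)\colon\widehat{H}\mathbb{F}\to\widehat{H}(\ZZ/2)$ is a monomorphism in $\Mod_{\fun}$. Likewise $\mathfrak{B}$ is a right adjoint, so each $\mathfrak{B}^n$ preserves monomorphisms; if one prefers to avoid invoking adjointness here, recall that monomorphisms in $\Mod_{\fun}=\Fun(\Fin_\ast,\Set_\ast)$ are exactly the objectwise injections and that $\mathfrak{B}$ is defined by precomposition with a fixed functor $\Delta^{\op}\times\Fin_\ast\to\Fin_\ast$, which manifestly carries objectwise injections to objectwise injections, and then iterate. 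Either way, $\iota_n:=\mathfrak{B}^n\widehat{H}(f)$ is the required monomorphism of $n$-fold simplicial $\Gamma$-sets $\mathfrak{B}^n\fun\hookrightarrow\mathfrak{K}(\ZZ/2,n)$, and it is natural in $n$ in the evident sense because $\mathfrak{B}^n\widehat{H}$ is a functor.

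The only step that is not completely formal is the first one — deciding which of $\mathbb{F}$ and $\ZZ/2$ maps to the other — and this is settled by the remark above about the asymmetry in the definition of a morphism of plasmas. For concreteness, on the object $\ang{1}\in\Fin_\ast$ the map $\iota_n$ is the inclusion $\mathfrak{S}^n\hookrightarrow\mathfrak{K}(\ZZ/2,n)\ang{1}$ sending the unique non-degenerate multisimplex in degree $[1,\ldots,1]$ to the one labelled by $1\in\ZZ/2$; combined with Proposition \ref{prop:wedgeproductdelooping} and the analogous wedge decomposition of the deloopings of $\widehat{H}(\ZZ/2)$, this yields a hands-on verification of objectwise injectivity in every multidegree, should one want it, but the adjunction argument makes this unnecessary.
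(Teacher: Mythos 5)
Your proposal is correct and follows the same route as the paper: exhibit the monomorphism of commutative plasmas $\mathbb{F}\hookrightarrow\ZZ/2$, then apply $\widehat{H}$ and $\mathfrak{B}^n$, which preserve monomorphisms as right adjoints. You simply spell out in more detail the verification that the identity on $\{0,1\}$ is a plasma morphism in the correct direction (which the paper calls ``evident''), plus an optional direct check that precomposition preserves objectwise injections.
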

	
	\begin{proof}
		Note that if $\mathbb{F}$ is the partial commutative monoid with elements $\{0,1\}$, having $0$ as additive identity and $1+1$ undefined, then we may write $\fun\cong \widehat{H}\fun$, where $\widehat{H}$ is the generalized Eilenberg-MacLane functor of \cite{beardsley-nakamura--projective}. There is evidently a monomorphism of commutative partial monoids $\mathbb{F}\hookrightarrow\mathbb{Z}/2$. Since $\widehat{H}$ is a right adjoint \cite[Theorem 3.13]{beardsley-nakamura--projective}, and right adjoints (e.g.~$\widehat{H}$ and $\mathfrak{B}$) preserve monomorphisms, the result follows.
	\end{proof}
	
	\begin{rmk}
		
		For $n=0$, this is the level-wise inclusion of $\Gamma$-sets $\fun\hookrightarrow H\mathbb{Z}/2$. In dimension $d$, the former is precisely $\ang{d}$, which has $d+1$ elements, and the latter is $(\mathbb{Z}/2)^d\cong\ang{1}^d$, which has $2^d$ elements. The image of this inclusion in dimension $d$ is the $d$ ``axes'' of $(\mathbb{Z}/2)^d$. Alternatively, it is $\vee_dS^0\hookrightarrow\prod_d\mathbb{Z}/2$.
		
		For $n=1$, the geometric realization of $\iota_n$ is necessarily homotopic to the inclusion $S^1\simeq\mathbb{R}P^1\hookrightarrow \mathbb{R}P^\infty$. Moreover, this inclusion is a map of topological commutative partial monoids, where $S^1$ is equipped with the fold-map partial monoid structure $S^1\times S^1\supset S^1\vee S^1\to S^1$. It is, of course, not a morphism of infinite loop spaces, as $\frB\fun$ does not satisfy the Segal condition as a $\Gamma$-space.
		
		In higher dimensions, $\iota_n$ realizes to the (homotopically) unique map $S^n\hookrightarrow K(\mathbb{Z}/2,n)$. It is still a map of partial topological commutative monoids.
	\end{rmk}
	
	By abuse of notation, we will use $d_i^n\colon \ang{n}\to\ang{n-1}$ to denote the image of the $i^{th}$ face map under the inclusion $\Delta^{\op}\hookrightarrow\Fin_\ast$. For $i=0$ (resp.~$i=n$) this is the function which sends $1$ to $0$ (resp.~$n$ to $0$) and shifts all other indices down by one (resp.~is the identity on all other indices). For $0<i<n$, this is the order preserving function which conflates the elements $i$ and $i+1$, in $\ang{n}$, both to $i\in\ang{n-1}$ and shifts indices as necessary. The statement of the following proposition requires recalling from Proposition \ref{prop:wedgeproductdelooping} that $\frB^n\fun\ang{k}\cong\vee_{k}\frB^n\fun\ang{1}$.
	
	\begin{prop}
		The face maps $\frB^n\fun(d_i^k)\colon \frB^n\fun\ang{k}\to\frB^n\fun\ang{k-1}$ are given by projection onto outer summands when $i=0,k$ and by the fold map on the $i^{th}$ and $i+1^{st}$ summands when $0<i<k$.
	\end{prop}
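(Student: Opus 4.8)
The plan is to use that $\frB^n\fun\colon\finpSet\to s^n\Set_\ast$ preserves finite coproducts, which was established in the proof of Proposition~\ref{prop:wedgeproductdelooping}, and then to read the face maps off purely from the combinatorics of $\finpSet$. Write $\iota_j\colon\ang{1}\hookrightarrow\ang{k}$ for the coproduct inclusion carrying the non-basepoint element to $j$; applying $\frB^n\fun$ to the coproduct cocone $(\iota_j)_{j=1}^k$ exhibits $\frB^n\fun\ang{k}$ as $\bigvee_k\frB^n\fun\ang{1}$ with the $\frB^n\fun(\iota_j)$ the summand inclusions, and this is exactly the identification of Proposition~\ref{prop:wedgeproductdelooping}. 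Now for any pointed map $g\colon\ang{k}\to\ang{m}$, functoriality gives $\frB^n\fun(g)\circ\frB^n\fun(\iota_j)=\frB^n\fun(g\circ\iota_j)$, so by the universal property of the wedge, $\frB^n\fun(g)$ is the unique map whose restriction to the $j$-th summand is $\frB^n\fun(g\circ\iota_j)$. But $g\circ\iota_j\colon\ang{1}\to\ang{m}$ is either the constant map at the basepoint, in which case $\frB^n\fun(g\circ\iota_j)$ is the zero map (since $\frB^n\fun$ is a pointed functor), or it equals $\iota_{g(j)}$, in which case $\frB^n\fun(g\circ\iota_j)$ is the inclusion of the $g(j)$-th summand. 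Hence $\frB^n\fun(g)$ is completely determined by the function $j\mapsto g(j)$ on non-basepoint elements.

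First I would feed in $g=d_i^k$, using the explicit description of $d_i^k\colon\ang{k}\to\ang{k-1}$ recalled immediately above the statement. When $i=0$, the element $1$ is sent to the basepoint and $j\mapsto j-1$ for $j\ge2$; therefore $\frB^n\fun(d_0^k)$ sends the first wedge summand to the basepoint and carries the remaining $k-1$ summands isomorphically onto the summands of $\bigvee_{k-1}\frB^n\fun\ang{1}$, i.e.~it is the projection onto the sub-wedge on the non-first summands, which is the ``projection onto outer summands'' of the statement; the case $i=k$ is identical with ``first'' replaced by ``last.'' When $0<i<k$, the description gives $j\mapsto j$ for $j\le i$, $i+1\mapsto i$, and $j\mapsto j-1$ for $j>i+1$; so $\frB^n\fun(d_i^k)$ restricts to the summand inclusion $\frB^n\fun(\iota_i)$ on \emph{both} the $i$-th and the $(i+1)$-st summands, which on those two summands is precisely the codiagonal (fold) map $\frB^n\fun\ang{1}\vee\frB^n\fun\ang{1}\to\frB^n\fun\ang{1}$, while the remaining summands are reindexed by the evident shift. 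Assembling these summand-wise descriptions is the statement.

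The only genuine care needed is bookkeeping: one must check that the coproduct decomposition used here agrees with the one in Proposition~\ref{prop:wedgeproductdelooping} (it does, since both come from $\ang{k}=\bigvee_k\ang{1}$ in $\finpSet$ together with coproduct-preservation of $\frB^n\fun$), and one must track the index shifts carefully so that the ``conflation of $i$ and $i+1$'' really lands as a fold on \emph{adjacent} summands rather than on some other pair. I do not expect any obstacle beyond this. As a sanity check, one recovers exactly the familiar bar-construction face maps: $d_0$ and $d_k$ are the ``forget an outer factor'' maps and $d_i$ is ``multiply the $i$-th and $(i+1)$-st factors,'' with the multiplication here forced to be the fold map because, by Remark~\ref{rmk:BF1 as bar construction} and Definition~\ref{defn:foldbar}, $\frB^n\fun$ carries the free partial commutative monoid structure, whose only binary operations are folds with the identity.
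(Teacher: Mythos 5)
Your proof is correct, but it takes a genuinely different route from the paper's. The paper argues by induction on $n$: the base case $n=0$ is read off from the functor $\mathtt{s}$ directly, and the inductive step uses Proposition \ref{prop: structure of BnF1} to reduce to the non-degenerate multisimplices in multidegree $[1,\ldots,1]$, then unwinds the composite defining $\frB^{n+1}\fun$, where restricting to $[1]$ in the new $\Delta^{\op}$ coordinate makes the smash factor trivial and recovers the face map of $\frB^n\fun$. You instead do no induction on $n$ at all in the statement itself: you take the coproduct-preservation of $\frB^n\fun$ (established inside the proof of Proposition \ref{prop:wedgeproductdelooping}) as the key input and deduce everything from the universal property of the wedge, noting that any pointed map $g\circ\iota_j\colon\ang{1}\to\ang{m}$ is either the zero map or a summand inclusion, so $\frB^n\fun(g)$ is determined summand-wise by the underlying function of $g$. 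This is a clean and valid argument; its extra payoff is that it treats \emph{all} morphisms of $\Fin_\ast$ uniformly, not just the $d_i^k$, and so essentially delivers the paper's subsequent theorem identifying $\frB^n\fun$ with $(\mathfrak{S}^n)^{\vee}$ in the same stroke (modulo checking the transpositions and unit/augmentation, which your argument also covers). The paper's induction is more hands-on and stays closer to the explicit combinatorics of $\mathtt{s}$, which is useful for the reader trying to visualize the simplicial structure, but your route is arguably the more conceptual one. Your index bookkeeping for $d_0^k$, $d_k^k$, and the inner faces matches the description given in the paper, so there is no gap.
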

	
	\begin{proof}
		We again prove the statement by induction. It is clearly true for $n=0$, purely by considering the functor $\mathtt{s}\colon\Delta^{\op}\to\Fin_\ast$ (see Tables 1 and 2 of \cite[Appendix A]{beardsley-nakamura--projective} for some visual intuition regarding this functor). For the general case, note that by Proposition \ref{prop: structure of BnF1}, it suffices to check the statement on the non-degenerate simplices in multidegree $[1,1,\ldots,1]$. Suppose the result holds for $n$ and consider the composite functor defining $\frB^{n+1}\fun$:
		\[\Fin_\ast\times\Delta^{\op}\xrightarrow{\mathrm{id}\times\mathtt{s}}\Fin_\ast\times\Fin_\ast\xrightarrow{\wedge}\Fin_\ast\xrightarrow{\frB^n\fun}s^n\Set_\ast\]
		Since we only care about the simplices in multidegree $[1,\ldots,1]$ we may restrict to $[1]$ in the $\Delta^{\op}$ coordinate in the above composite. In this case, because smashing with $\ang{1}$ is the identity, we simply recover the corresponding face map from $\frB^n\fun$, which is the desired function $\{0,1,...,k\}\to\{0,1,\ldots,k-1\}$.
	\end{proof}
	
	A similarly straightforward inductive analysis indicates that the remaining morphisms of $\Fin_\ast$ behave as expected under $\frB^n\fun$. This leads to the following.
	
	\begin{thm}
		As a $\Gamma$-object in $s^{n}\Set_\ast$, the functor $\frB^n\fun$ is isomorphic to $(\mathfrak{S}^n)^{\vee}$ in the sense of Definition \ref{defn:foldbar} .
	\end{thm}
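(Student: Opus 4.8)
The plan is to recognize $(\mathfrak{S}^n)^\vee$ for what Definition \ref{defn:foldbar} makes it: the essentially unique functor $\Fin_\ast\to s^n\Set_\ast$ that preserves finite coproducts and whose value at $\ang{1}$ is $\mathfrak{S}^n$. Indeed, since $\Fin_\ast$ (together with its zero object $\ang{0}$) is freely generated under finite coproducts by the single object $\ang{1}$, evaluation at $\ang{1}$ is an equivalence from the category of finite-coproduct-preserving functors $\Fin_\ast\to\CC$ to $\CC$ itself, for any $\CC$ with finite coproducts and a zero object; and $(\mathfrak{S}^n)^\vee$ is by construction the object of the former category lying over $\mathfrak{S}^n$. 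So the whole theorem reduces to two assertions: that $\frB^n\fun$ preserves finite coproducts, and that $\frB^n\fun\ang{1}\cong\mathfrak{S}^n$.

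Both are already available. Coproduct preservation is exactly what is established in the proof of Proposition \ref{prop:wedgeproductdelooping}: it is shown there that the curried functor preserves coproducts in the $\Fin_\ast$-variable because $\wedge$ distributes over $\vee$ in $\Fin_\ast$; in particular $\frB^n\fun\ang{0}$ is the one-point object $\ast$, which is the zero object of $s^n\Set_\ast$. And $\frB^n\fun\ang{1}\cong\mathfrak{S}^n$ is the corollary proved just above. Hence the chosen isomorphism $\frB^n\fun\ang{1}\cong\mathfrak{S}^n$ extends, uniquely, to a natural isomorphism of coproduct-preserving functors $\frB^n\fun\cong(\mathfrak{S}^n)^\vee$, i.e.\ to an isomorphism of $\Gamma$-objects in $s^n\Set_\ast$.

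If one prefers to avoid the universal-property shortcut, the same conclusion follows by checking directly the data that, per the remark after Definition \ref{defn:foldbar}, pins down a symmetric monoidal functor out of the PROP for augmented commutative monoids: the value at $\ang{1}$ is $\mathfrak{S}^n$ (the corollary above); the unit $\ang{0}\to\ang{1}$ and augmentation $\ang{1}\to\ang{0}$ are forced to be the basepoint inclusion and the terminal map because $\frB^n\fun\ang{0}=\ast$; the fold face map $d_1^2\colon\ang{2}\to\ang{1}$ goes to the codiagonal $\mathfrak{S}^n\vee\mathfrak{S}^n\to\mathfrak{S}^n$ by the preceding proposition; and the transposition $\tau\colon\ang{2}\to\ang{2}$ goes to the swap of wedge summands by the same induction used there — the base case $\fun(\tau)=\tau$ is by definition the interchange of the two copies of $\ang{1}$ in $\ang{2}$, and for the inductive step one restricts the new simplicial coordinate to $[1]$ (legitimate by Proposition \ref{prop: structure of BnF1}), where smashing with $\ang{1}$ is the identity on $\Fin_\ast$, so $\frB^n\fun(\tau)$ reduces to $\frB^{n-1}\fun(\tau)$. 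Either way there is no computational difficulty here; the only point that needs real care — and the step I would state most carefully — is the purely formal one, namely making precise that $\Fin_\ast$ with its zero object is freely generated under finite coproducts by one object, so that agreement at $\ang{1}$ together with coproduct-preservation genuinely forces a natural (not merely levelwise) isomorphism of $\Gamma$-objects.
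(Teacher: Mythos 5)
Your argument is correct, and your primary route is genuinely different from the paper's. The paper proves the theorem by checking the generators of the PROP one at a time: the preceding proposition computes $\frB^n\fun(d_i^k)$ by induction on $n$, and the values on the remaining generating morphisms (unit, augmentation, transposition) are dispatched with the phrase ``a similarly straightforward inductive analysis'' --- i.e.\ the paper's proof is essentially your second, ``avoid-the-shortcut'' paragraph, with part of the verification left implicit. Your main argument instead isolates the clean formal statement that $\Fin_\ast$, with its zero object, is the free category with finite coproducts on one generator (every map $\ang{1}\to\ang{n}$ is either zero or a summand inclusion, so a coproduct- and zero-preserving functor is determined, naturally, by its value at $\ang{1}$), and then feeds in exactly the two inputs the paper has already supplied: coproduct preservation from the proof of Proposition \ref{prop:wedgeproductdelooping} (correctly citing the proof rather than the weaker objectwise statement, and noting $\frB^n\fun\ang{0}=\ast$), and the identification $\frB^n\fun\ang{1}\cong\mathfrak{S}^n$ from the corollary. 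What your approach buys is that the face-map proposition and the unstated analysis of the other generators become corollaries rather than prerequisites; what it costs is that the freeness of $\Fin_\ast$ must be stated and justified carefully, which you rightly flag as the one point needing care. Both routes are sound and rest on the same computations.
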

	
	\begin{rmk}
		In other words, $\frB^n\fun$ is precisely the $\Gamma$-object describing the free partial commutative monoid structure on the partial strict $n$-category with a unique non-identity $n$-cell. It is, in a certain sense, the most degenerate possible object with ``$n$-categorical structure'' and ``symmetric monoidal structure.'' It has an identity morphism and a single non-trivial morphism, and there is ``space'' for things to be tensored and composed, but (except for the identity) no assumptions are made on \textit{how} precisely to do so. Morphisms from this object to a symmetric monoidal $n$-category simply pick out an $n$-endomorphism of the identity $(n-1)$-endomorphism of the tensor unit. If that symmetric monoidal $n$-category is in fact an infinite loop space, this recovers its $n^{th}$ homotopy group.
	\end{rmk}

		\printbibliography
		
	\end{document}